\DeclareMathOperator{\Spec}{Spec\,}
\DeclareMathOperator{\Res}{Res}
\DeclareMathOperator{\Resh}{Res^h}
\newcommand{\im}{\mathrm{Im}}
\newcommand{\rN}{\mathrm{N}}
\newcommand{\inj}{\hookrightarrow}
\newcommand{\flis}{\xrightarrow{\sim}}
\newcommand{\ffl}{\longrightarrow}
\newcommand{\CC}{{\mathbb C}}
\newcommand{\ZZ}{{\mathbb Z}}
\newcommand{\NN}{{\mathbb N}}
\newcommand{\AAA}{{\mathbb A}}
\def\Fp{\mathfrak{p}}
\newcommand{\ol}{\overline}
\newcommand{\OOO}{{\mathscr{O}}}
\newcounter{nc}
\renewcommand{\thenc}{{\rm(\roman{nc})}}
\newenvironment{romlist}%
{\begin{list}{\thenc}{
\usecounter{nc}
\parsep=0pt
\setlength  \labelwidth{\leftmargin}
\addtolength\labelwidth{-\labelsep}
}
}{\end{list}}
\newcommand{\pauseromlist}%
{\global\edef\savecount{\arabic{nc}}\end{romlist}}
\newcommand{\finpauseromlist}%
{\begin{romlist}\setcounter{nc}{\savecount}}
\newcounter{nnc}
\renewcommand{\thennc}{{\rm(\alph{nnc})}}
{\begin{list}{\thennc}{
\usecounter{nnc}
\parsep=0pt
\setlength  \labelwidth{\leftmargin}
\addtolength\labelwidth{-\labelsep}
}
}{\end{list}} 
\newcounter{ctnum}
\renewcommand{\thectnum}{\textup{(\arabic{ctnum})}}
\newenvironment{numlist}%
{\begin{list}{\thectnum}{
\usecounter{ctnum} 
\parsep=0pt
\leftmargin=0pt%
\setlength{\itemindent}{\labelwidth}%
\addtolength{\itemindent}{\labelsep}%
}
}{\end{list}}
\newcounter{ssl}
\renewcommand{\thessl}{\thesubsection.\arabic{ssl}}
\newenvironment{subseclist}%
{\begin{list}{{\bfseries\thessl.}\hfill}{
\usecounter{ssl}
\leftmargin=0pt
\rightmargin=0pt
\labelwidth=2em 
\labelsep=.5em
\itemindent=2.5em
\itemsep=1ex
\parsep=0pt
\listparindent=2em%
}
}{\end{list}}
\newtheorem{enMBteo}[subsection]{Theorem}
\newtheorem{enMBprop}[subsection]{Proposition}
\newtheorem{enMBdefi}[subsection]{Definition}
\theoremstyle{definition}
\newtheorem{enMBsnot}[subsubsection]{Notation}
\newtheorem{enMBsubrems}[subsubsection]{Remarks}
\theoremstyle{plain}
\newtheorem{enMBsubprop}[subsubsection]{Proposition}
\newtheorem{enMBsublem}[subsubsection]{Lemma}
\newtheorem{enMBsubdefi}[subsubsection]{Definition}
\newcommand\rref[1]{{\rm\ref{#1}}}
\DeclareMathOperator{\rad}{rad}
\newcommand{\moins}{\smallsetminus}
\newcommand{\ul}[1]{\underline{#1}}
\newcommand{\acc}[1]{\{{#1}\}}
\newcommand{\hens}[1]{{#1}^{\mathrm{h}}}
\newcommand{\fini}[1]{{#1}^{\mathrm{f}}}
\newcommand{\algh}[1]{\hens{\mathrm{Alg}}_{#1}}
\newcommand{\algp}[1]{\mathrm{Alg}^{+}_{#1}}
\title{A henselian preparation theorem\\
}
\author{%
Laurent Moret-Bailly\thanks{Univ Rennes, CNRS, IRMAR - UMR 6625, F-35000 Rennes, France}
 \thanks{\tt laurent.moret-bailly@univ-rennes1.fr}
\thanks{The author was partially supported by the Centre Henri Lebesgue (program ANR-11-LABX-0020-0) and the  Geolie project (ANR-15-CE 40-0012).}}
\begin{document}
\selectlanguage{english}
\date{\today}
%
\maketitle
\begin{center}
\fbox{ 
\parbox{12cm}{\begin{center}Published in {\sl Israel Journal of Mathematics},\\ {\bf257} (2023), 519--531
(volume in honor of Moshe Jarden)\\
doi: 10.1007/s11856-023-2545-1
\end{center}}%
}\\
\vskip1.5cm
{\sl Respectfully dedicated to Moshe Jarden}\end{center}
\medskip

\begin{abstract}
We prove an analogue of the Weierstrass preparation theorem for henselian pairs, generalizing the local case recently proved by Bouthier and \v{C}esnavi\v{c}ius. As an application, we construct a henselian analogue  of the resultant of $p$-adic series defined by Berger.\end{abstract}
\noindent{\sl AMS 2020 Classification:} 13J15; 13B40; 13P15.

\section{Introduction}
Let $R$ be a ring (commutative, with unit). We denote by $R\acc{t}$ the henselization of the polynomial ring $R[t]$ with respect to the ideal $(t)$: this is a subring of the power series ring $R[[t]]$. (For a brief review of henselian pairs and henselization, see Section \ref{ssecHensPairs}).

The aim of this work is to prove the following result:

\begin{enMBteo}\label{ThIntro}
Let $R$ be a ring, $I$ an ideal of $R$. Assume that $(R,I)$ is a henselian pair. Let $d$ be a natural integer and let $f$ be an element of $R\acc{t}$ which in $R[[t]]$ has the form $f= \sum_{i\geq0}a_{i}t^{i}$, where $a_{d}\in R^\times$ and $a_{i}\in I$ for $i<d$. Then:
\begin{numlist}
\item\label{ThIntro1}  The images of $1,t,\dots,t^{d-1}$ form a basis of the  $R$-module $S=R\acc{t}/(f)$. 
\item\label{ThIntro2}  \textup{(Division theorem)} Every element of $R\acc{t}$ can be written uniquely in the form $Bf+C$  where $B\in R\acc{t}$ and where $C\in R[t]$ is a polynomial of degree $<d$. 
\item\label{ThIntro3} \textup{(Preparation theorem)} $f$ can be written uniquely as $f=(t^d+Q)\,v$ where $v\in R\acc{t}^\times$ and where $Q\in R[t]$ has degree $<d$ and coefficients in  $I$. 
\end{numlist}
\end{enMBteo}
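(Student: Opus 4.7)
The three statements are tightly linked. I would organize the proof around (iii), establishing it first and deducing (i) and (ii) as consequences.

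First, (i) is a reformulation of (ii): (ii) states the bijectivity of the $R$-linear map $R\acc{t}\oplus R[t]_{<d}\to R\acc{t}$, $(B,C)\mapsto Bf+C$, which is exactly the assertion that $1,t,\dots,t^{d-1}$ forms an $R$-basis of $R\acc{t}/(f)$. And (iii) follows from (ii) by applying division to $g=t^d$: one obtains $t^d=Bf-Q$ with $Q\in R[t]_{<d}$. To check that $Q\in IR[t]_{<d}$ and that $B$ is a unit, I reduce modulo $IR\acc{t}$. The key observation is that $IR\acc{t}\subseteq\mathrm{rad}(R\acc{t})$: since $(R\acc{t},(t))$ is henselian, every maximal ideal of $R\acc{t}$ contains $t$ and corresponds to a maximal ideal of $R=R\acc{t}/(t)$, each of which contains $I$ by the henselian hypothesis on $(R,I)$. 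In $(R/I)\acc{t}$ the reduction $\bar f$ equals $t^d\bar u$ with $\bar u(0)=\bar a_d$ a unit, hence $\bar u$ itself is a unit (since $((R/I)\acc{t},(t))$ is henselian). The identity $t^d+\bar Q=\bar B\bar u\,t^d$, read inside $(R/I)[[t]]$ where $t$ is a non-zero-divisor, forces $\bar Q=0$ and $\bar B=\bar u^{-1}$; since units lift modulo any ideal contained in the Jacobson radical, $B\in R\acc{t}^\times$, and uniqueness in (iii) follows from uniqueness in (ii).

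Conversely, once (iii) is established, (i) and (ii) follow from the sub-claim that the natural map
\[
R[t]/(t^d+Q)\;\xrightarrow{\sim}\;R\acc{t}/(t^d+Q)
\]
is an isomorphism for any $Q\in IR[t]_{<d}$: the right-hand side then becomes $R[t]/(t^d+Q)$, which is free of rank $d$ over $R$ with basis $1,\dots,t^{d-1}$ by Euclidean division for the monic polynomial $t^d+Q$. The sub-claim amounts to showing that the pair $(A,\bar tA)$ is henselian, where $A=R[t]/(t^d+Q)$. Since $R\to A$ is finite, $(A,IA)$ is henselian; since $\bar t^d=-Q(\bar t)\in IA$, the image of $\bar t$ is nilpotent in $A/IA$, so $(A,IA+\bar tA)$ is also henselian. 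A coprime monic factorization over $A/(\bar t)$ restricts to one over $A/(IA+\bar tA)$, which lifts to $A$ by the latter henselianness; using that $IA\subseteq\mathrm{rad}(A)$ (max ideals of $A$ lie over max ideals of $R$, which contain $I$), Nakayama combined with uniqueness of idempotents modulo the Jacobson radical shows the lift reduces correctly modulo $\bar tA$.

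The heart of the proof is therefore (iii). My approach would be to reduce to the local henselian case of Bouthier and \v{C}esnavi\v{c}ius: for every maximal ideal $\mathfrak m$ of $R$ (all containing $I$), the local henselization $R_{\mathfrak m}^h$ is henselian local, the hypotheses on $f$ pass to $R_{\mathfrak m}^h\acc{t}$, and their theorem yields a unique local preparation $f=(t^d+Q_{\mathfrak m})v_{\mathfrak m}$. The principal obstacle, I expect, is descending these local preparations to a global preparation over $R$: henselization is not a Zariski-local construction, so descent is not automatic. One would need to combine faithful flatness of $R\to\prod_{\mathfrak m}R_{\mathfrak m}^h$ with the uniqueness supplied by the local theorem to ensure the $Q_{\mathfrak m}$ glue to a single $Q\in R[t]_{<d}$, or alternatively run a direct Newton-style iteration inside $R\acc{t}$ validated by a henselian-approximation argument in place of $I$-adic completeness.
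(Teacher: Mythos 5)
Your reductions among the three statements are mostly sound, and two of them coincide with the paper's own: the equivalence of (1) and (2) (note that passing from (1) to the uniqueness in (2) also needs $f$ to be a nonzerodivisor in $R\acc{t}$, a fact you use tacitly and which the paper extracts from flatness of the map $R[u]\to R\acc{t}$, $u\mapsto f$), and the derivation of (3) from (2) by dividing $t^d$ and reducing modulo $I$, which is literally the paper's argument. Your route back from (3) to (1), via the observation that $A=R[t]/(t^d+Q)$ is finite over $R$ with $\bar t^{\,d}\in IA$, so that $(A, IA+\bar tA)$ is henselian and $R[t]\to A$ factors through $R\acc{t}$, is correct in substance and is a genuinely different closing of the circle (the paper instead proves (1) directly and never needs this). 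Two local caveats: you do not need, and will generally not get, henselianity of $(A,\bar tA)$ itself, since $I$ need not be nilpotent modulo $\bar t$ --- the universal property of the henselization only requires \emph{some} henselian ideal containing $\bar tA$, which $IA+\bar tA$ provides; and your final sentence about lifting coprime factorizations and idempotents is both unnecessary and unconvincing as written.

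The genuine gap is that the load-bearing statement of your plan, (3), is never proved: you name the obstacle (globalizing the local preparation of Bouthier--\v{C}esnavi\v{c}ius) and propose two escapes, neither of which works as stated. The map $R\to\prod_{\mathfrak m}R^h_{\mathfrak m}$ is not faithfully flat in general: by Chase's theorem an infinite product of flat modules is flat only over a coherent ring, and the theorem is aimed precisely at non-noetherian $R$; moreover $R\acc{t}$ does not commute with infinite products, so the local preparations do not even assemble into data over $\bigl(\prod_{\mathfrak m}R^h_{\mathfrak m}\bigr)\acc{t}$. If instead you spread each $Q_{\mathfrak m}$ out to an \'etale neighborhood and try to glue, the ``uniqueness supplied by the local theorem'' is not available where you need it: on overlaps of \'etale neighborhoods, and at primes not containing $I$, the relevant pairs are not henselian local along an ideal for which $f$ is normal, so agreement of the candidate $Q$'s is exactly what remains to be proved; the ``Newton-style iteration validated by henselian approximation'' is only a name for a proof. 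The paper avoids all of this by proving (1) directly and self-containedly: it writes $(R\acc{t},f)$ as a filtered colimit of \'etale $R[t]$-algebras $(A_\lambda,f_\lambda)$ with geometrically irreducible fibers (Lemma \ref{lem:StrucSerHens}), shows $u\mapsto f_\lambda$ is flat and quasifinite by a fiberwise argument on smooth affine curves (Proposition \ref{prop:platitude}), so that $S=\varinjlim S_\lambda$ with each $S_\lambda$ flat, finitely presented and quasifinite with $\ol{S_\lambda}\cong\ol{R}[t]/(t^d)$, and then invokes the decomposition result over henselian pairs (Proposition \ref{propQuasiFini}, resting on Zariski's main theorem and idempotent lifting) to split off finite locally free factors $\fini{S}_\lambda$ free on $1,t,\dots,t^{d-1}$, with $S=\varinjlim\fini{S}_\lambda$ by the Zariski-pair property; the transition maps carry basis to basis, so $S$ is free on that basis. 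In short, the paper deliberately does \emph{not} reduce to the local (or noetherian, or complete) case, because that reduction is the hard, unresolved step in your proposal.
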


\subsection{Related results}

The result (today) most commonly named ``Weierstrass preparation theorem'' is the analogous statement where $R\acc{t}$ is replaced by $R[[t]]$ where $R$ is a complete noetherian local ring with maximal ideal $I$: see for instance  \cite[VII, \S3, \no 8, prop. 5]{BourAC5-7}. 
This formal variant was generalized by  O'Malley \cite[2.10]{OMalley72} to the case where $R$ is $I$-adically complete and separated  (but is no longer assumed local or noetherian).

In the local case, there is a convergent variant, where $R=K\langle x_{1},\dots,x_{n}\rangle$ is the ring of germs of analytic functions in $n$ variables over some  field $K$ complete for an absolute value, and the role of $R\acc{t}$ is played by $K\langle x_{1},\dots,x_{n},t\rangle$. For $K=\CC$, this is in fact the original theorem of Weierstrass. It is generally proved by inspection of the above formal variant (where $R$ is $K[[ x_{1},\dots,x_{n}]]$), checking that the series constructed in the proof remain convergent; see for instance \cite[Theorem 45.3]{NagLocal}.

When $R$ is \emph{local henselian} with maximal ideal $I$, Theorem \ref{ThIntro} was proved by Bouthier and \v{C}esnavi\v{c}ius in \cite[3.1.2]{Bou-Ces20}, which inspired the present paper.  The proof we give here is somewhat different and more direct: we do not use reduction to the noetherian case or the classical preparation theorem, but we work directly from the construction of $R\acc{t}$ as a filtered colimit of \'etale $R[t]$-algebras.

Regrettably, there does not seem to be, at the moment, a general result covering all the above-mentioned variants, or at least a common strategy of proof.

\subsection{Outline of the paper}
In Section \ref{sec:Prelim}, we recall some basic facts about henselian pairs and henselization, some elementary results on henselian series rings (i.e. of the form $R\acc{t_{1},\dots,t_{n}}$), and a useful decomposition result for $R$-schemes, where $R$ is as in Theorem \ref{ThIntro}. 

Theorem  \ref{ThIntro} itself is proved in section \ref{sec:ThPrep}. The three statements are easily deduced from each other; here we derive  \ref{ThIntro2} and \ref{ThIntro3} from \ref{ThIntro1}. 

Finally, as an easy application, we define in Section \ref{sec_resultant} a notion of resultant in $R\acc{t}$, entirely similar to the resultant constructed by Berger \cite{Berger20} for $p$-adic formal power series.

\bigskip

\noindent {\bf Notation and conventions.} 
All rings are commutative with unit; ring homomorphisms respect unit elements. The unit group of a ring $R$ is denoted by $R^\times$,  its Jacobson radical by $\rad(R)$.

If $x$ is a point of a scheme, $\kappa(x)$ denotes its residue field.

Let $Y$ be a closed subscheme of a scheme $X$. We say $(X,Y)$ is a \emph{Zariski pair} if $X$ is the only open subscheme of $X$ containing $Y$; this condition only depends on the underlying spaces. If $X=\Spec(A)$ is affine and $I\subset A$ is the ideal of $Y$, we say $(A,I)$ is a Zariski pair if $(X,Y)$ is a Zariski pair or, equivalently, if $I\subset\rad(A)$. If $(X,Y)$ is Zariski and $X'\to X$ is a closed morphism, then $(X',Y\times_{X}X')$ is Zariski.
\bigskip

\noindent {\bf Acknowledgments.} The author is grateful to Henri Lombardi and Herwig Hauser for pointing out references, and to the referee for his/her remarks.

\section{Preliminary results}\label{sec:Prelim}
\subsection{Review of henselian pairs}\label{ssecHensPairs}
The notion of a henselian pair was defined by  Lafon \cite{L}, generalizing the local case introduced by Azumaya \cite{Az}. 
Let us first recall the definition:
\begin{enMBsubdefi}
Let $R$ be a ring and $I$ an ideal of $R$. We say that $(R,I)$ is a \emph{henselian pair} if for every \'etale $R$-algebra $R'$, every morphism $\ol{\rho}:R'\to R/I$ of $R$-algebras lifts to a morphism $\rho:R'\to R$.
\end{enMBsubdefi}
If $(R,I)$ is a henselian pair, we also say occasionally that $(\Spec(R), \Spec(R/I))$ is a henselian pair. (There is an obvious genaralization to general schemes, but we only need the affine case). 
A \emph{henselian local ring} is a local ring $R$, with maximal ideal $I$, such that $(R,I)$ is henselian.

A henselian pair is a Zariski pair: if $f\in 1+I$, apply the definition to $R'=R_{f}$. It follows that, given $\ol{\rho}$ as in the definition, $\rho$ is unique. Another immediate consequence of the henselian property is that the map $R\to R/I$ induces a bijection on idempotents: consider $R'=R[x]/(x(x-1))$.

There are many equivalent definitions of a henselian pair; for this and for more generalities, see for instance \cite[Tag 09XD]{StProj}. One important property that we shall use  is that if $(R,I)$ is a henselian pair, so is  $(R',IR')$ for every \emph{finite} (or just integral) $R$-algebra $R'$. In particular, idempotents of $R'/IR'$ lift uniquely to idempotents of $R'$.

\subsubsection{Henselization}
Let $R$ be a ring and $I\subset R$ an ideal. The category of henselian pairs $(S,J)$, where $S$ is an $R$-algebra and $J$ is an ideal containing $IS$, has an initial object $\hens{(R,I)}=(\hens{R},\hens{I})$%
\footnote{Of course, the notation $\hens{R}$ will be used only if there is no doubt about $I$.}
 called the \emph{henselization} of $(R,I)$ (or the henselization of $R$ at $I$). We have $\hens{I}=I\hens{R}$ and $R/I\flis\hens{R}/\hens{I}$. We can construct $\hens{R}$ as the filtered colimit of \'etale $R$-algebras $R'$ such that $R/I\flis R'/IR'$; in particular, $\hens{R}$ is flat over $R$, and faithfully flat if $(R,I)$ is a Zariski pair. If $R'$ is an integral $R$-algebra (for instance a quotient of $R$), then $\hens{(R',IR')}=\hens{(R,I)}\otimes_{R}R'$.

\subsection{Structure of henselian series rings}\label{sec:StructureSeries}
Let $R$ be a ring, $\ul{t}=(t_{1},\dots,t_{n})$ a finite sequence of indeterminates
\footnote{For the preparation theorem we only need the case  $n=1$. The case of an infinite set of indeterminates is left to the reader.} 
We denote by $R\acc{\ul{t}}$ the henselization of $R[\ul{t}]$ at the ideal $(t_{1},\dots,t_{n})$; it is an $R[\ul{t}]$-algebra with an isomorphism $\varepsilon:R\acc{\ul{t}}/(\ul{t})\flis R$, and there is a natural injection $R\acc{\ul{t}}\inj R[[\ul{t}]]$ making $R[[\ul{t}]]$ the  $(\ul{t})$-adic completion of $R\acc{\ul{t}}$; the image of $f\in R\acc{\ul{t}}$ in $R[[\ul{t}]]$ will be denoted by $f_{\mathrm{for}}$. 

As a functor of $R$, $R\acc{\ul{t}}$ is better behaved than $R[[\ul{t}]]$. In particular, it commutes with filtered colimits, and if $I$ is any ideal of $R$ we have $R\acc{\ul{t}}/IR\acc{\ul{t}}\cong(R/I)\acc{\ul{t}}$.

For $f\in R\acc{\ul{t}}$ we have the equivalences:  
$$f\in R\acc{\ul{t}}^\times\Leftrightarrow f_{\mathrm{for}}\in R[[\ul{t}]]^\times\Leftrightarrow \varepsilon(f)\in R^\times.$$
It follows that $\rad(R\acc{\ul{t}})$ is generated by  $\rad(R)$ and $(\ul{t})$. In particular, if  $(R,I)$ is a Zariski pair, so is $(R\acc{\ul{t}}, I R\acc{\ul{t}}+(\ul{t}))$. 

Similarly, if $(R,I)$ is a henselian pair, so is $(R\acc{\ul{t}}, I R\acc{\ul{t}}+(\ul{t}))$: to see this, view $R$ as the quotient $R\acc{\ul{t}}/(\ul{t})$ and apply the transitivity property \cite[0DYD]{StProj}.

Classically, $R\acc{\ul{t}}$ can be constructed as the colimit of a filtered family  $(A_{\lambda})_{\lambda\in L}$ of \'etale $R[\ul{t}]$-algebras, with compatible isomorphisms $\varepsilon_{\lambda}:A_{\lambda}/(\ul{t})A_{\lambda}\flis R$. In particular, for all $\lambda\in L$ and  $N\in\NN$, the natural morphism of  $R$-algebras
$ R[\ul{t}]/(\ul{t})^N \to A_{\lambda}/(\ul{t})^N A_{\lambda}$
is an isomorphism.

Each natural  morphism $\pi_{\lambda}:\Spec(A_{\lambda})\to\Spec(R)$ is smooth of relative dimension $n$, and has a section $s_{\lambda}$ deduced from $\varepsilon_{\lambda}$. 

We say that an $R$-algebra $A$ is \emph{geometrically irreducible} if the natural morphism  $\Spec(A)\to\Spec(R)$ has geometrically irreducible fibers.

\begin{enMBsublem}\label{lem:StrucSerHens} Let $R$ and  $\ul{t}=(t_{1},\dots,t_{n})$ be as above. Then one can choose the system $(A_{\lambda})_{\lambda\in L}$ such that each $A_{\lambda}$ is a geometrically irreducible $R$-algebra.
\end{enMBsublem}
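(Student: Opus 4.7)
My plan is to refine the given filtered system $(A_{\lambda})_{\lambda\in L}$ of \'etale $R[\ul{t}]$-algebras with compatible sections $\varepsilon_{\lambda}$ so that each member becomes geometrically irreducible over $R$, while preserving the colimit $R\acc{\ul{t}}$. It suffices to show that for every $\lambda$, some factor $A'_{\lambda}$ of a tensor product $A_{\lambda}\otimes_{R[\ul{t}]}A_{\mu}$ (for an appropriate $\mu\geq\lambda$) is geometrically irreducible over $R$, with a compatible section and a natural map from $A_{\lambda}$; the cofinal subsystem of geometrically irreducible members will then yield the desired presentation.

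To construct $A'_{\lambda}$, I consider the \'etale $R\acc{\ul{t}}$-algebra $A_{\lambda}\otimes_{R[\ul{t}]}R\acc{\ul{t}}$. Its reduction modulo $(\ul{t})$ is $R$, retracted by $\varepsilon_{\lambda}$. Because $(R\acc{\ul{t}},(\ul{t}))$ is a henselian pair (Section~\ref{sec:StructureSeries}), the henselian lifting property supplies a unique $R\acc{\ul{t}}$-algebra retraction $A_{\lambda}\otimes R\acc{\ul{t}}\to R\acc{\ul{t}}$; since a section of an \'etale morphism over a henselian pair is a clopen immersion (via lifting of idempotents), this produces a splitting $A_{\lambda}\otimes R\acc{\ul{t}}\cong R\acc{\ul{t}}\times B$. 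Because $R\acc{\ul{t}}=\mathrm{colim}_{\mu}A_{\mu}$ and filtered colimits commute with tensor products and with the idempotent condition, this splitting is realized at some finite stage $\mu\geq\lambda$ as $A_{\lambda}\otimes_{R[\ul{t}]}A_{\mu}=A'_{\lambda}\times D$. Then $A'_{\lambda}$ is \'etale over $R[\ul{t}]$ (as a direct factor), inherits a compatible section, and satisfies $A'_{\lambda}\otimes_{R[\ul{t}]}R\acc{\ul{t}}\cong R\acc{\ul{t}}$.

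The main obstacle is verifying that $A'_{\lambda}$ is geometrically irreducible over $R$. The morphism $\Spec A'_{\lambda}\to\Spec R$ is smooth of relative dimension $n$ and carries a section, so by the Galois-orbit argument showing that a smooth connected scheme over a field with a rational point is geometrically connected, and because smooth connected schemes are irreducible, it suffices to prove that each geometric fiber of $\Spec A'_{\lambda}\to\Spec R$ is connected. Fix a geometric point $\bar x\in\Spec R$ with residue field $\bar\kappa$ and set $E=A'_{\lambda}\otimes_{R}\bar\kappa$. Base-changing the isomorphism $A'_{\lambda}\otimes R\acc{\ul{t}}\cong R\acc{\ul{t}}$ along $R\to\bar\kappa$ and using $R\acc{\ul{t}}\otimes_{R}\bar\kappa\cong\bar\kappa\acc{\ul{t}}$ (Section~\ref{sec:StructureSeries}), I obtain $E\otimes_{\bar\kappa[\ul{t}]}\bar\kappa\acc{\ul{t}}\cong\bar\kappa\acc{\ul{t}}$.

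Suppose for contradiction $\Spec E$ decomposed as $\Spec E_{0}\sqcup\Spec E_{1}$ with $E_{1}\neq 0$ and the section lying in $\Spec E_{0}$. Since $\bar\kappa\acc{\ul{t}}\hookrightarrow\bar\kappa[[\ul{t}]]$ is a subring of a domain, it is itself a domain, so its splitting $(E_{0}\otimes\bar\kappa\acc{\ul{t}})\times(E_{1}\otimes\bar\kappa\acc{\ul{t}})\cong\bar\kappa\acc{\ul{t}}$ forces one factor to vanish; the section lies in $E_{0}$, so we conclude $E_{1}\otimes\bar\kappa\acc{\ul{t}}=0$. However, $\bar\kappa[\ul{t}]\hookrightarrow\bar\kappa\acc{\ul{t}}$ is an injection of domains preserving the generic point, and any nonzero \'etale $\bar\kappa[\ul{t}]$-algebra has nonzero generic fiber, yielding $E_{1}\otimes\Frac\bar\kappa\acc{\ul{t}}\neq 0$, a contradiction. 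Hence $\Spec E$ is connected, and $A'_{\lambda}$ is geometrically irreducible as required.
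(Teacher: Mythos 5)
Your proof breaks at the crucial geometric-irreducibility step, because of a base-ring confusion that turns out to be fatal to the whole strategy. The splitting you produce lives over $R\acc{\ul{t}}$, and its finite-stage realization gives an isomorphism $A'_{\lambda}\otimes_{A_{\mu}}R\acc{\ul{t}}\cong R\acc{\ul{t}}$ --- a tensor over $A_{\mu}$, \emph{not} over $R[\ul{t}]$. In the fiber computation you silently upgrade this to $E\otimes_{\bar\kappa[\ul{t}]}\bar\kappa\acc{\ul{t}}\cong\bar\kappa\acc{\ul{t}}$, which does not follow: the tensor over $\bar\kappa[\ul{t}]$ equals $E\otimes_{B}\bigl(B\otimes_{\bar\kappa[\ul{t}]}\bar\kappa\acc{\ul{t}}\bigr)$ with $B=A_{\mu}\otimes_{R}\bar\kappa$, which is strictly larger in general. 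The version over $B$ that you actually have cannot produce your contradiction, because $\Spec\bar\kappa\acc{\ul{t}}\to\Spec B$ is very far from dominant ($R\acc{\ul{t}}$ is faithfully flat over $R[\ul{t}]$ only when $(R,(\ul{t}))$ is Zariski, which it is not; for $R$ a field and $n=1$ the image misses every closed point of $\AAA^{1}$ except the origin), so a nonzero clopen piece $E_{1}$ with $E_{1}\otimes_{B}\bar\kappa\acc{\ul{t}}=0$ is entirely possible. Indeed your recipe genuinely fails to force geometric irreducibility: take $R=k$ a field, $n=1$, $A_{\lambda}=A_{\mu}=k[t]\times k[t,1/t]$ (a legitimate member of the system: \'etale over $k[t]$ with $A/(t)A\cong k$). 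Writing $e_{1},e_{2}$ and $e'_{1},e'_{2}$ for the factor idempotents, the idempotent $e_{\mu}=e_{1}\otimes e'_{1}+e_{2}\otimes e'_{2}$ in $A_{\lambda}\otimes_{k[t]}A_{\mu}$ lifts the section idempotent of $A_{\lambda}\otimes_{k[t]}k\acc{t}\cong k\acc{t}\times k\acc{t}[1/t]$, and its factor is $A'_{\lambda}\cong k[t]\times k[t,1/t]$: disconnected, even though $A'_{\lambda}\otimes_{A_{\mu}}k\acc{t}\cong k\acc{t}$. A separate error: the identity $R\acc{\ul{t}}\otimes_{R}\bar\kappa\cong\bar\kappa\acc{\ul{t}}$ you attribute to Section~\ref{sec:StructureSeries} is false --- the paper asserts compatibility only with quotients $R/I$ and filtered colimits, and henselization does not commute with localization or field extension (e.g.\ $\ZZ\acc{t}\otimes_{\ZZ}\mathbb{Q}\subsetneq\mathbb{Q}\acc{t}$, since elements of the left side have bounded denominators). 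That particular slip is repairable by composing with the canonical map $R\acc{\ul{t}}\otimes_{R}\bar\kappa\to\bar\kappa\acc{\ul{t}}$, but the base-ring confusion above is not.

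The structural lesson is that the junk components of $A'_{\lambda}$ sit over parts of $\Spec A_{\mu}$ that $\Spec R\acc{\ul{t}}$ never sees, so no clopen splitting after base change to $R\acc{\ul{t}}$ can eliminate them; one must shrink $\Spec A_{\lambda}$ itself around the section \emph{before} any base change. That is exactly what the paper does: it takes $U_{\lambda}$ to be the union of the connected components of the fibers of $\pi_{\lambda}$ meeting the section $s_{\lambda}$, which is open precisely because $\pi_{\lambda}$ is smooth \cite[(15.6.7)]{EGA4_III}, then chooses a principal open $\Spec(A_{\lambda}[1/f])$ squeezed between $\im(s_{\lambda})$ and $U_{\lambda}$; its fibers are nonempty opens in smooth connected fibers with a rational point, hence geometrically irreducible, and closing the system under such localizations makes the geometrically irreducible members cofinal. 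Note that your fiberwise input (smooth $+$ connected $+$ rational point $\Rightarrow$ geometrically irreducible) is the same as the paper's; what your argument is missing is this openness-of-fiber-components localization, which cannot be replaced by the idempotent-splitting trick.
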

\begin{proof} Starting with an arbitrary family $(A_{\lambda})_{\lambda\in L}$, we may assume, by enlarging $L$, that for all $\lambda\in L$ and $f\in A_{\lambda}$ such that $\varepsilon_{\lambda}(f)\in R^\times$, the localized algebra $A_{\lambda}[1/f]$ is still in the family. It suffices to show that, assuming this, the sub-system formed by the geometrically irreducible $A_{\lambda}$'s is cofinal. For each $\lambda$, let $U_{\lambda}\subset\Spec(A_{\lambda})$ be the union of the connected components of the fibers of  $\pi_{\lambda}$ meeting the section  $s_{\lambda}$. As $\pi_{\lambda}$ is smooth, $U_{\lambda}$ is open in  $\Spec(A_{\lambda})$ \cite[(15.6.7)]{EGA4_III}, and its fibers over $\Spec(R)$ are smooth and connected, with a rational point, hence geometrically irreducible. Since $U_{\lambda}$ is open, there is $f\in A_{\lambda}$ such that $\im(s_{\lambda})\subset \Spec(A_{\lambda}[1/f])\subset U_{\lambda}$ (in an affine scheme, every closed subset has a basis of principal open neighborhoods). The fibers of  $\Spec(A_{\lambda}[1/f])\to\Spec(R)$ are nonempty and open in those of  $U_{\lambda}\to\Spec(R)$ and therefore geometrically irreducible. This completes the proof.\end{proof}

\subsubsection{Evaluation}\label{ssec:eval}
This section will not be used until Section \ref{sec_resultant}. 

Let us keep the notation of  \ref{sec:StructureSeries} and consider the category  $\algh{R}$ of henselian pairs  $(A,J)$ where $A$ is an $R$-algebra. Then $(R\acc{\ul{t}},(\ul{t}))$ is an object of $\algh{R}$ corepresenting the set-valued functor $(A,J)\mapsto \prod_{i=1}^n J$. In particular, for an object $(A,J)$ of $\algh{R}$  and a sequence $\ul{\alpha}=(\alpha_{1},\dots,\alpha_{n})$ from $J$, we have a morphism ``evaluation at $\ul{\alpha}$'' from $R\acc{\ul{t}}$ to $A$ which we denote by $f\mapsto f(\ul{\alpha})$. One may construct it by noting that the morphism  $P\mapsto P(\ul{\alpha})$ from $R[\ul{t}]$ to $A$ maps the  $t_{i}$'s into $J$, hence factors through $R\acc{\ul{t}}$ because  $(A,J)$ is henselian.

For given $\ul{\alpha}$, the element $f(\ul{\alpha})$ is the sum in $A$, for the $J$-adic topology, of the series $f_{\mathrm{for}}(\ul{\alpha})$ obtained by substituting $\ul{\alpha}
$ for $\ul{t}$; this property characterizes  $f(\ul{\alpha})$ if $A$ is $J$-adically separated (but not in general).

The reader can check the following nice property, which will not be used here: if $I$ is an ideal of  $R$ generated by $n$ elements $a_{1},\dots,a_{n}$, the evaluation morphism $f\mapsto f(\ul{a})$ induces an isomorphism of  $R\acc{\ul{t}}/(t_{i}-a_{i})_{1\leq i\leq n}$ with the henselization $\hens{(R,I)}$.

\subsection{Schemes over henselian pairs: a decomposition result}\label{ssecQuasiFini}
\begin{enMBsnot}\label{not:qf1} Let $(R,I)$ be a henselian pair. Put $S=\Spec(R)$,  $\ol{R}=R/I$, and $\ol{S}=\Spec(\ol{R})$; more generally, for each $R$-algebra $A$, (resp.\ each $R$-scheme $X$) we shall put $\ol{A}=A/IA$ (resp.\ $\ol{X}=X\times_{S}\ol{S}$).
\end{enMBsnot}
The following proposition is a variant of \cite[XI, cor. 1 p. 119]{RayHens}:

\begin{enMBsubprop}\label{propQuasiFini} 
With notation as above, let $Z$ be a separated $R$-scheme of finite type. Assume that  $\ol{Z}$ is \emph{finite} over $\ol{R}$. 

Then there is a unique open and closed subscheme  $\fini{Z}$ of $Z$ which is finite over $R$ and satisfies $\ol{\fini{Z}}=\ol{Z}$. Moreover $\fini{Z}$ has the following properties:
\begin{numlist}
\item\label{propQuasiFini1} The pair $(\fini{Z},\ol{Z})$ is henselian.
\item\label{propQuasiFini2}  $\fini{Z}$ is the smallest open subscheme of  $Z$ containing $\ol{Z}$.
\item\label{propQuasiFini3} Let  $T$ be an $R$-scheme and  $u:T\to Z$ an $R$-morphism. Assume that  $(T,\ol{T})$ is a Zariski pair. Then $u$ factors through $\fini{Z}$.
\end{numlist}
\end{enMBsubprop}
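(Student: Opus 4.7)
The plan is to reduce to the case of a finite $R$-scheme via Zariski's Main Theorem, and then to cut out the desired open and closed subscheme as the support of an idempotent lifted through the henselian pair structure.

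First, since $\ol{Z}$ is finite over $\ol{R}$, every point of $\ol{Z}$ is isolated in its fiber of $Z\to S$, so $Z\to S$ is quasi-finite at every point of $\ol{Z}$. By Zariski's Main Theorem (for instance \cite[(8.12.6)]{EGA4_III}), there is an open neighborhood $U$ of $\ol{Z}$ in $Z$ together with an open immersion $j:U\hookrightarrow\widetilde{Z}$ into a finite $R$-scheme $\widetilde{Z}$. Since $\widetilde{Z}$ is finite over the henselian pair $(R,I)$, the pair $(\widetilde{Z},\ol{\widetilde{Z}})$ is itself henselian, as recalled in \ref{ssecHensPairs}. The image $\ol{j}(\ol{Z})\subset\ol{\widetilde{Z}}$ is open (because $j$ is) and closed (because $\ol{Z}\to\ol{S}$ is proper), hence corresponds to an idempotent $\ol{e}\in\Gamma(\ol{\widetilde{Z}},\OOO)$, which lifts uniquely to an idempotent $e\in\Gamma(\widetilde{Z},\OOO)$. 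This yields a decomposition $\widetilde{Z}=\fini{Z}\sqcup\widetilde{Z}''$ with $\ol{\fini{Z}}=\ol{Z}$ in which $\fini{Z}$ is finite over $R$. To see that $\fini{Z}$ actually lives in $U$ (and hence in $Z$), I would observe that $\fini{Z}\cap U$ is an open subscheme of $\fini{Z}$ containing $\ol{\fini{Z}}$, and $(\fini{Z},\ol{\fini{Z}})$ is henselian, hence Zariski, which forces $\fini{Z}\cap U=\fini{Z}$. Properness of $\fini{Z}$ over $R$ combined with separatedness of $Z$ then makes the resulting inclusion $\fini{Z}\hookrightarrow Z$ both open and closed.

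The remaining statements follow by the same style of Zariski-pair argument. Property \ref{propQuasiFini1} is immediate since $\fini{Z}$ is finite over the henselian pair $(R,I)$. For \ref{propQuasiFini2}, if $V\subset Z$ is open and contains $\ol{Z}$, then $V\cap\fini{Z}$ is open in $\fini{Z}$ and contains $\ol{\fini{Z}}$, hence equals $\fini{Z}$. For \ref{propQuasiFini3}, $u^{-1}(\fini{Z})$ is open in $T$ and contains $\ol{T}$ (since $u$ sends $\ol{T}$ into $\ol{Z}=\ol{\fini{Z}}$), so the Zariski hypothesis on $(T,\ol{T})$ gives $u^{-1}(\fini{Z})=T$. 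Uniqueness is an instance of the same pattern: any competing open and closed subscheme $Z'\subset Z$ finite over $R$ with $\ol{Z'}=\ol{Z}$ is itself henselian over $(R,I)$, and applying the Zariski property to $Z'\cap\fini{Z}$ inside both $Z'$ and $\fini{Z}$ gives $Z'=\fini{Z}$.

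The only substantive input is the application of Zariski's Main Theorem to embed a neighborhood of $\ol{Z}$ into a finite $R$-scheme; once that is in hand, every remaining step is a routine manipulation of idempotents and Zariski or henselian pairs.
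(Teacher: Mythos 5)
Your proof is correct and follows essentially the same route as the paper: restrict to the (open) quasi-finite locus around $\ol{Z}$, embed it into a finite $R$-scheme by Zariski's Main Theorem, lift the idempotent cutting out $\ol{Z}$ through the henselian pair, and then use Zariski-pair arguments to force $\fini{Z}$ into $Z$ and to obtain properties \ref{propQuasiFini1}--\ref{propQuasiFini3} and uniqueness. One small point: since $R$ is not assumed noetherian and $Z$ is only of finite type (not finite presentation) over $R$, you should cite the finite-type form of ZMT, \cite[(18.12.13)]{EGA4_IV}, rather than \cite[(8.12.6)]{EGA4_III}, together with the openness of the quasi-finite locus \cite[(13.1.4)]{EGA4_III}.
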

\begin{proof}
Let us first assume the existence of $\fini{Z}$ and prove  \ref{propQuasiFini1},  \ref{propQuasiFini2} and  \ref{propQuasiFini3}. First, \ref{propQuasiFini1} is clear since  $(R,I)$ is henselian and  $\fini{Z}$ is finite over $R$. In particular, $(\fini{Z},\ol{Z})$ is a Zariski pair, and  \ref{propQuasiFini2} follows because  $\fini{Z}$ is open in  $Z$. Now take $u:T\to Z$  as in \ref{propQuasiFini3}: then $u^{-1}(\fini{Z})$ is a neighborhood of $\ol{T}$ in $T$, hence equal to $T$, which proves \ref{propQuasiFini3}.

Observe that \ref{propQuasiFini2}, for instance, implies the uniqueness of $\fini{Z}$. 
Now let us prove existence. First, consider the set $Z'$ of points $x\in Z$  isolated in their fiber above  $\Spec(R)$. Then $Z'$ is open in $Z$ \cite[(13.1.4)]{EGA4_III} and, viewed as an open subscheme, it is quasifinite over $\Spec(R)$; in addition, we have $\ol{Z'}=\ol{Z}$. So it is clear that if $\fini{Z'}$ exists it is  open in  $Z$, and closed since it is finite over $R$, so we can take $\fini{Z}=\fini{Z'}$. Replacing  $Z$ by $Z'$, we can therefore assume  $Z$ \emph{quasifinite} over $R$.

By Zariski's main theorem  \cite[(18.12.13)]{EGA4_IV}, there is an open  immersion  $Z\inj Z^{c}$, where $Z^{c}$ is a finite $R$-scheme. As $\ol{Z}$ is finite over $R$, the induced open immersion $\ol{Z}\inj\ol{Z^{c}}$ is closed, so we have $\ol{Z^{c}}=\ol{Z}\amalg Y$ for an open and closed subscheme $Y$ of $\ol{Z^{c}}$. Since $(R,I)$ is henselian and $Z^{c}$ is finite over $R$, this decomposition is induced (using the idempotent lifting property) by a decomposition $Z^{c}=\fini{Z}\amalg Z^{c}_{1}$ of $Z^{c}$, where $\fini{Z}$ and $Z^{c}_{1}$ are finite over $R$ and $\ol{\fini{Z}}=\ol{Z}$. In particular  $(\fini{Z},\ol{\fini{Z}})=(\fini{Z},\ol{Z})$ is a Zariski pair. Since $Z\cap \fini{Z}$ is open in  $\fini{Z}$ and contains $\ol{Z}$, it is therefore equal to $\fini{Z}$ which means that $\fini{Z}\subset Z$ and $Z=\fini{Z}\amalg Z'$ with $Z':=Z\cap Z^{c}_{1}$. Thus, the desired conditions for $\fini{Z}$ are satisfied.
\end{proof}

\begin{enMBsubrems}\label{RemFonctQuasiFini} \begin{numlist}
\item Assertions \ref{propQuasiFini2} and \ref{propQuasiFini3} of  \ref{propQuasiFini} only use the existence of  $\fini{Z}$ and the Zariski property for $(R,I)$.
\item We see in particular that $\fini{Z}$ is the largest closed subscheme of  $Z$ which is finite over  $S$. Moreover, $\fini{Z}$ is functorial in  $Z$: if $Y$ is a separated  $R$-scheme of finite type with  $\ol{Y}$ finite over $\ol{R}$, every $R$-morphism  $Z\to Y$ sends $\fini{Z}$  to $\fini{Y}$.
\item Using more sophisticated tools, one can generalize  \ref{propQuasiFini} by replacing ``finite'' by ``proper'' in the conditions for $\ol{Z}$ and $\fini{Z}$. For the proof, the first step (reduction to the quasifinite case) is of course ignored. One uses Nagata compactification to choose an open immersion $Z\inj Z^{c}$ into a proper $S$-scheme $p:Z^{c}\to S$. Then by the properness of $Z^c$ and the henselian property of $(R,I)$, we can apply \cite[Tag 0A0C]{StProj} to the sheaf $(\ZZ/2\ZZ)_{Z^c}$ to conclude that the idempotent defining $\ol{Z}$ in $\ol{Z^{c}}$ lifts to a unique idempotent on $Z^c$, which we take to define $\fini{Z}$.
\item Assume that $R$ is local henselian and $I$ is its maximal ideal, and let $Y$ be a separated $R$-scheme of finite type. Let $y$ be an \emph{isolated} point of $\ol{Y}$. Then $C:=\ol{Y}\moins\{y\}$ is closed in $Y$, so we can apply \ref{propQuasiFini} to $Z:=Y\moins C$ since $\ol{Z}=\{y\}$ set-theoretically. It is then easy to see that $\fini{Z}=\Spec(\OOO_{Y,y})$. In particular, $\OOO_{Y,y}$ is a finite $R$-module: this is the Mather division theorem as stated in \cite[Theorem 1]{AlLo2010}. The approach in \cite{AlLo2010} (and the related paper \cite{AlLo2017}) is algorithmic, while here we use Zariski's main theorem as a magic wand.
\end{numlist}\end{enMBsubrems}

\section{The preparation theorem}\label{sec:ThPrep}
\subsection{Notation and assumptions}\label{ssec:ThPrepNot}
We fix a ring $R$ and an indeterminate $t$. We denote by $\algp{R[t]}$ the category of pairs $(A,x)$ where $A$ is an $R[t]$-algebra and  $x$ is an element of  $A$.

We also fix an element $f$ of $R\acc{t}$, and we write $$f_{\mathrm{for}}=\sum_{i\geq0}a_{i}t^i \in R[[t]]\quad(a_{i}\in R).$$
We assume that \emph{the ideal generated by the  $a_{i}$'s ($i>0$) is equal to $R$}. Equivalently, for all $\Fp\in\Spec(R)$, the image of $f$ in $\kappa(\Fp)[[t]]$, or in $\kappa(\Fp)\acc{t}$, is not a constant. 

Finally we denote by $S$ the $R[t]$-algebra $R\acc{t}/(f)$.

\begin{enMBprop}\label{prop:platitude} With the assumptions of  \rref{ssec:ThPrepNot}, we also fix an indeterminate $u$.
\begin{numlist}
\item\label{prop:platitude1} The object $(R\acc{t},f)$ of $\algp{R[t]}$ is the filtered colimit of a system $(A_{\lambda},f_{\lambda})_{\lambda\in L}$ with, for each  $\lambda\in L$,  the following properties:
\begin{romlist}
\item\label{prop:platitude11} The $R[t]$-algebra $A_{\lambda}$ is \'etale and, for all $n\in\NN$, the canonical morphism  $R[t]/(t^{n})\to A_{\lambda}/t^{n}A_{\lambda}$ is an isomorphism.
\item\label{prop:platitude13} The canonical  $R$-morphism $R[u]\to A_{\lambda}$ mapping $u$ to $f_{\lambda}$ is flat and quasifinite.
\end{romlist}
In particular, the canonical $R$-morphism $R[u]\to R\acc{t}$ mapping $u$ to $f$ is flat, and $f$ is a nonzerodivisor in $R\acc{t}$.
\item\label{prop:platitude2} The $R[t]$-algebra $S$ is the filtered colimit of a system $(S_{\lambda})_{\lambda\in L}$ with the following properties:
\begin{romlist}
\item\label{prop:platitude21} Each $R$-algebra $S_{\lambda}$ is flat, of finite presentation and quasifinite, and the transition maps $S_{\lambda}\to S_{\mu}$ ($\lambda\leq\mu)$ are \'etale. (In particular, $S$ is flat over $R$.)
\item\label{prop:platitude22} For all $n\in\NN$ and $\lambda\in L$, the canonical morphism  $R[t]/(t^{n})\to S_{\lambda}/t^{n}S_{\lambda}$ is surjective.
\end{romlist}
\end{numlist}
\end{enMBprop}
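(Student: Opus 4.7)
The proof combines the description of $R\acc{t}$ recalled in Section~\ref{sec:StructureSeries} with a fiberwise analysis of the morphism $u\mapsto f$. Applying Lemma~\ref{lem:StrucSerHens}, I write $R\acc{t}=\varinjlim_{\lambda\in L}A_{\lambda}$ where each $A_{\lambda}$ is an \'etale $R[t]$-algebra, geometrically irreducible over $R$, with $\varepsilon_{\lambda}:A_{\lambda}/(t)\flis R$. After passing to a cofinal subsystem, I may assume that $f$ lifts to $f_{\lambda}\in A_{\lambda}$ for every $\lambda$, so that $(R\acc{t},f)=\varinjlim(A_{\lambda},f_{\lambda})$ in $\algp{R[t]}$. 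This gives property \ref{prop:platitude11} directly from the construction.

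The heart of the argument is property \ref{prop:platitude13}. Consider the morphism $\psi_{\lambda}:R[u]\to A_{\lambda}$ sending $u\mapsto f_{\lambda}$. Both source and target are $R$-flat and $A_{\lambda}$ is of finite presentation over $R[u]$, so by the fiber criterion of flatness (\cite[(11.3.10)]{EGA4_III}) it suffices to check that $\kappa(\Fp)[u]\to B_{\Fp}:=A_{\lambda}\otimes_{R}\kappa(\Fp)$ is flat for every $\Fp\in\Spec(R)$. Here $B_{\Fp}$ is a smooth \emph{geometrically irreducible} affine curve over $\kappa(\Fp)$, hence a regular integral domain. The hypothesis that $(a_{i})_{i>0}=R$ forces the image of $f_{\lambda}$ in $B_{\Fp}$ to be non-constant (its truncation modulo $t^{N}$ is a non-constant polynomial for $N$ large enough). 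Therefore $\Spec(B_{\Fp})\to\AAA^{1}_{\kappa(\Fp)}$ is a dominant morphism from an integral curve, so $B_{\Fp}$ is torsion-free over the PID $\kappa(\Fp)[u]$, hence flat. Quasifiniteness of $\psi_{\lambda}$ is visible on the same fibers. The ``in particular'' statements then follow: flatness of $R[u]\to R\acc{t}$ passes to the filtered colimit, and nonzerodivisorship of $f$ is inherited from that of $u$ via flatness.

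For Part \ref{prop:platitude2}, set $S_{\lambda}:=A_{\lambda}/(f_{\lambda})$, so that $S=\varinjlim S_{\lambda}$ by exactness of filtered colimits. Property \ref{prop:platitude22} is immediate from \ref{prop:platitude11}. For \ref{prop:platitude21}: $S_{\lambda}=A_{\lambda}\otimes_{R[u]}R$ is $R$-flat as a base change of the $R[u]$-flat algebra $A_{\lambda}$; it is of finite presentation over $R$ because $A_{\lambda}$ is; it is quasifinite because its fibers are the zero loci of non-constant functions on smooth affine curves; and each transition map $S_{\lambda}\to S_{\mu}$ is the base change of the \'etale morphism $A_{\lambda}\to A_{\mu}$ along $A_{\lambda}\to S_{\lambda}$, hence \'etale.

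The main obstacle is the flatness step for $\psi_{\lambda}$. The use of Lemma~\ref{lem:StrucSerHens} is essential here: without geometric irreducibility, $B_{\Fp}$ could fail to be integral, and the torsion-free-over-a-PID reduction would break down. Everything else is a routine transfer along the filtered colimit.
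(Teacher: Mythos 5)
Your proof is correct and follows essentially the same route as the paper: Lemma \ref{lem:StrucSerHens} to get geometrically irreducible \'etale $R[t]$-algebras with $f$ lifted cofinally, the fiberwise flatness criterion \cite[(11.3.10)]{EGA4_III} reduced to flatness of nonconstant maps from smooth geometrically integral curves to $\AAA^1$, and part \rref{prop:platitude2} obtained by setting $S_{\lambda}=A_{\lambda}/(f_{\lambda})$. The only (harmless) variations are that you verify nonconstancy of $f_{\lambda}$ in the fiber via truncation modulo $t^{N}$ rather than by mapping to $\kappa(\Fp)\otimes_{R}R\acc{t}$, and that you make the torsion-free-over-a-PID step explicit where the paper leaves it implicit.
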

\begin{proof}
Part  \ref{prop:platitude1} immediately implies part \ref{prop:platitude2}, with $S_{\lambda}=A_{\lambda}/(f_{\lambda})$ (the transition maps are \'etale due to the same property for the $A_{\lambda}$'s, which are \'etale over $R[t]$).

To prove \ref{prop:platitude1}, write $R\acc{t}=\varinjlim_{\lambda\in L}A_{\lambda}$ as in Lemma \ref{lem:StrucSerHens}, and call $t_{\lambda}\in A_{\lambda}$ the canonical image of  $t$. There exists $\lambda_{0}\in L$ and  $f_{\lambda_{0}}\in A_{\lambda_{0}}$ mapping to $f$; we can restrict  $L$ to the indices  $\lambda\geq\lambda_{0}$ and, for each  $\lambda$, denote by $f_{\lambda}\in A_{\lambda}$ the image of $f_{\lambda_{0}}$. Clearly,  we have $(R\acc{t},f)=\varinjlim_{\lambda\in L}(A_{\lambda},f_{\lambda})$. Part  \ref{prop:platitude1}\,\ref{prop:platitude11} is obvious from the choice of $(A_{\lambda})_{\lambda\in L}$.

Let us prove \ref{prop:platitude1}\,\ref{prop:platitude13}. For fixed $\lambda$, we can view $f_{\lambda}$ as a  morphism $g_{\lambda}:X_{\lambda}:=\Spec(A_{\lambda})\to\AAA^1_{R}=\Spec(R[u])$ of $R$-schemes. For  $s\in\Spec(R)$, the $\kappa(s)$-morphism  $g_{\lambda,s}:X_{\lambda,s}\to\AAA^1_{\kappa(s)}$ induced on the fibers is deduced from $1\otimes f_{\lambda}\in\kappa(s)\otimes_{R}A_{\lambda}$, whose image in  $\kappa(s)\otimes_{R}R\acc{t}$ is assumed nonconstant. So $g_{\lambda,s}$ is not constant  on  $X_{\lambda,s}$, which is a smooth geometrically irreducible curve over $\kappa(s)$. It follows that  $g_{\lambda,s}$ is flat and quasifinite. Since $X_{\lambda}$ and $\AAA^1_{R}$ are smooth over  $\Spec(R)$, the ``fiberwise flatness'' criterion \cite[(11.3.10)]{EGA4_III} shows that  $g_{\lambda}$ is flat. It is also quasifinite since it is affine of finite presentation with finite fibers. This completes the proof.\end{proof}

\begin{enMBdefi}\label{def_weier} 
Let $R$ be a ring, $I$ an ideal of  $R$, $t$ an indeterminate.

We say that a formal power series $f=\sum_{i\geq0}a_{i}t^i\in R[[t]]$ is  \emph{$I$-normal} if there is   $d\in\NN$ such that $a_{d}\in R^\times$ and $a_{i}\in I$ for $i<d$. The integer $d$ (unique if $I\neq R$) is called the order of $f$. 
 
 We say that $f$ is \emph{$I$-monic} of order $d$ if it is $I$-normal of order $d$ and $a_{d}=1$. 

An element $f$ of $R\acc{t}$ is $I$-normal ($I$-monic) of order $d$ if $f_{\mathrm{for}}\in R[[t]]$ is.
\end{enMBdefi}
\subsection{Proof of Theorem \ref{ThIntro}}\label{ssecDemThIntro}
As in \ref{ThIntro}, let $(R,I)$ be a henselian pair and let $f\in R\acc{t}$ be $I$-normal of order $d$, with $f_{\mathrm{for}}=\sum_{i\geq0}a_{i}t^i \in R[[t]]\quad(a_{i}\in R)$.  If $d=0$, everything is trivial, so we assume in addition that $d>0$; thus, the assumption of \ref{ssec:ThPrepNot} is satisfied and, in particular, Proposition \ref{prop:platitude} applies to $f$.

Assume assertion \ref{ThIntro}\,\ref{ThIntro1} is proved, i.e.\ $S=R\acc{t}/(f)$ is a free $R$-module with the images of $1,t,\dots,t^{d-1}$ as a basis. This immediately implies the division theorem \ref{ThIntro}\,\ref{ThIntro2}, with uniqueness coming from the fact that $f$ is a nonzerodivisor  (\ref{prop:platitude}\,\ref{prop:platitude1}). 
 
In turn, the division theorem implies the preparation theorem \ref{ThIntro}\,\ref{ThIntro3}. Indeed, the relation in \ref{ThIntro3} can be rewritten as $t^d=v^{-1}\,f-Q$, so that uniqueness follows from the uniqueness part of \ref{ThIntro2}; next, applying \ref{ThIntro2} to $t^d$, we find that $t^d=Bf-Q$ where $Q$ is a polynomial of degree $<d$. Reducing modulo $I$ and comparing coefficients, we see that $Q$ has coefficients in $I$ and the  constant term of $B$ is in $a_{d}+I$, which gives  \ref{ThIntro3} with $v=B^{-1}$.%

It remains to prove \ref{ThIntro}\,\ref{ThIntro1}. As in \ref{ssecQuasiFini}, we put $\ol{A}=A/IA$ for every $R$-algebra $A$. 

First we observe that the image $\ol{f}$ of $f$ in $\ol{R\acc{t}}\cong\ol{R}\acc{t}$ is the product of $t^d$ by a unit, so that $\ol{S}\cong\ol{R}\acc{t}/(t^d)\cong\ol{R}[t]/(t^d)$ which is $\ol{R}$-free with basis  $(1,t,\dots,t^{d-1})$. 

Let us write  $S$ as the colimit of a filtered system $(S_{\lambda})_{\lambda\in L}$ of $R[t]$-algebras with the properties of \ref{prop:platitude}\,\ref{prop:platitude2}. We have just seen that  $t^d$ vanishes in $\ol{S}$, so by changing the index set $L$ we may assume that  $t^d$  vanishes in $\ol{S_{\lambda}}$ for all $\lambda$: thus,  $\ol{S_{\lambda}}=\ol{S_{\lambda}/t^d S_{\lambda}}$ hence, by  \ref{prop:platitude}\,\ref{prop:platitude2}\,\ref{prop:platitude22}, it is a  quotient of $\ol{R}[t]/(t^d)$. So we have  morphisms of $\ol{R}[t]$-algebras $\ol{R}[t]/(t^d)\to\ol{S_{\lambda}}\to\ol{S}$ where the first map is surjective and the composition is an  isomorphism. We conclude that  $\ol{R}[t]/(t^d)\flis\ol{S_{\lambda}}$ for all $\lambda$. In particular, $\ol{S_{\lambda}}$ is finite over $\ol{R}$. As $(R,I)$ is henselian, we may apply Proposition \ref{propQuasiFini} and write $S_{\lambda}=\fini{S}_{\lambda}\times T_{\lambda}$, where $\fini{S}_{\lambda}$ is \emph{finite} over $R$ and $\ol{\fini{S}_{\lambda}}=\ol{S_{\lambda}}$. By functoriality (Remark \ref{RemFonctQuasiFini}), the quotients $\fini{S}_{\lambda}$ of the  $S_{\lambda}$'s form an inductive system. 

Since $S$ is a quotient of  $R\acc{t}$ and $(R\acc{t},IR\acc{t})$ is a Zariski pair, so is $(S,IS)$. Hence, for all $\lambda$, the canonical morphism  $S_{\lambda}\to S$ factors through $\fini{S}_{\lambda}$ by \ref{propQuasiFini}\,\ref{propQuasiFini3}, and finally $S=\varinjlim_{\lambda\in L}\fini{S}_{\lambda}$. 

Since, for each $\lambda$,  $S_{\lambda}$ is a flat  $R$-algebra of finite presentation, so is $\fini{S}_{\lambda}$, which is in addition a finite $R$-module, hence locally free. As $(1,t_{\lambda},\dots,t_{\lambda}^{d-1})$ induces an $\ol{R}$-basis of $\ol{\fini{S}_{\lambda}}$, and $I\subset\rad{(R)}$, it follows easily that $(1,t_{\lambda},\dots,t_{\lambda}^{d-1})$ is an  $R$-basis of $\fini{S}_{\lambda}$ for all $\lambda$, and part \ref{ThIntro1} follows.
\section{Application: a henselian resultant}\label{sec_resultant}

If  $R$ is a ring, $S$ a finite locally free $R$-algebra and $x$ an element of $S$, we denote by $\rN_{S/R}(x)\in R$ the norm of $x$ in $R$, i.e.\ the determinant of multiplication by $x$ in the  $R$-module $S$.

\begin{enMBdefi}\label{def_resultant} Let $(R,I)$ be a henselian pair. Let $f\in R\acc{t}$ be \emph{$I$-monic }of order $d$. Denote by $S$ the $R$-algebra $R\acc{t}/(f)$ \emph{(which is a free  $R$-module of rank $d$, by \rref{ThIntro}\,\rref{ThIntro1})}. 

For $g\in R\acc{t}$, the  \emph{(henselian) resultant} of $f$ and $g$, denoted by $\Resh(f,g)$, is the element of $R$ defined by
$$\Resh(f,g):=\rN_{S/R}(g).$$
\end{enMBdefi}

\subsection{Properties of the resultant}\label{ssec_PropRes}
We keep the notation and assumptions of  \ref{def_resultant}, and we denote by  $P=t^d+Q$ the polynomial associated to  $f$ by  \ref{ThIntro}\,\ref{ThIntro3}. The proofs of the following properties are easy and left to the reader.
 \begin{subseclist}
\item \emph{Functoriality:} Let $\varphi:(R,I)\to(R',I')$ be a  morphism of henselian pairs, $f'$ et $g'$ the images of $f$ and $g$ in $R'\acc{t}$. Then $\Resh(f',g')=\varphi(\Resh(f,g))$.
\item By construction, $\Resh(f,g)$ only depends on $f$  via the $R$-algebra $R\acc{t}/(f)$. In  particular, $\Resh(f,g)=\Resh(P,g)$.
\item $\Resh(f,g)$ only depends on  $g$ via its class modulo $f$; in other words, we have $\Resh(f,g+hf)=\Resh(f,g)$ for all $h\in R\acc{t}$. Moreover, $\Resh(f,g)\in R^\times$ if and only if the ideal $(f,g)\subset R\acc{t}$ equals  $R\acc{t}$. (More generally, see \ref{elim} below.)
\item\label{ssec_PropRes4}\emph{Special cases:}  If $\alpha\in R$, we have  $\Resh(f,\alpha)=\alpha^d$ and $\Resh(f,\alpha-t)=P(\alpha)$.

If $\alpha\in I$, then $\Resh(\alpha-t, g)=g(\alpha)$, and $\Resh(f,\alpha-t)=(1+\varepsilon)\,f(\alpha)$ for some $\varepsilon\in I$ by the second formula above (recall that $f$ is $I$-monic).
\item\label{ssec_PropRes5} \emph{Multiplicativities:} If $h\in R\acc{t}$, we have $\Resh(f,gh)=\Resh(f,g)\Resh(f,h)$; if in addition $h$ is $I$-monic of order $m$, then  $\Resh(fh,g)=\Resh(f,g)\Resh(h,g)$. For the second equality, one may use the exact sequence
$$0\ffl R\acc{t}/(h) \xrightarrow{\,\times f\,}R\acc{t}/(fh) \ffl R\acc{t}/(f) \ffl 0.$$
\item\label{ssec_PropResPol} \emph{Polynomials:} If $f$ and $g$ are in $R[t]$, with $f$ \emph{monic of degree $d$} (in the sense of polynomials), then $\Resh(f,g)$ is the usual resultant. The condition on $f$ is essential: for instance,  $\Resh(1+\alpha t,g)=1$ for all $\alpha\in R$ and $g\in R\acc{t}$. (In fact, for two possibly non-monic polynomials of respective degrees  $\leq d$ and $\leq m$, the definition of the classical resultant depends on the choice of $d$ and $m$.)
\item \emph{Weak symmetry:} Assuming that $g$ is $I$-monic of order $m$,  then $\Resh(g,f)= (-1)^{md}\,(1+\varepsilon)\,\Resh(f,g)$ for some $\varepsilon\in I$. To see this, reduce to the case of polynomials and apply \ref{ssec_PropResPol}.
\item\label{elim} \emph{Elimination:} Let $J\subset R\acc{t}$ be the ideal generated by $f$ and $g$. Then $\Resh(f,g)\in J$ (thus it belongs to $J\cap R$): indeed, in the free $R$-module  $S=R\acc{t}/(f)$, the image of multiplication by $g$ contains $\Resh(f,g)\,S$. 

Conversely, every $\alpha\in  J\cap R$ is a multiple of the class of $g$ in $S$ so, taking norms, $\alpha^d$ is a multiple of $\Resh(f,g)$ in $R$. In  particular, we have in  $R$ the  inclusions
$(\Resh(f,g))\subset J\cap R\subset\sqrt{(\Resh(f,g))}$. Geometrically, the closed subset $V(\Resh(f,g))\subset\Spec(R)$ is the projection of $V(f,g)\subset\Spec(R\acc{t})$.
\item \emph{Roots:} Let $\varphi:R\to R'$ be a ring homomorphism, and let $\alpha\in R'$ be a zero of $P$ in $R'$. First, I claim that $g(\alpha)$ makes sense in $R'$ and is  an element of $R[\alpha]\subset R'$. Indeed, 
the relation $P(\alpha)=0$ shows that (due to the form of $P$) $\alpha^d\in IR[\alpha]$, whence $\alpha\in\sqrt{IR[\alpha]}$. Since  $R[\alpha]$ is a finite $R$-module, the pair  $(R[\alpha],\sqrt{IR[\alpha]})$ is henselian, hence the claim.

Now assume that the image of   $P$ in $R'[t]$ factors as $\prod_{i=1}^d (t-\alpha_{i})$, where the $\alpha_{i}$'s are elements of  $R'$. Then we have  in $R'$ the equality
$$\varphi(\Resh(f,g))=\prod_{i=1}^d g(\alpha_{i})$$
as follows from the above remark and properties \ref{ssec_PropRes4} and \ref{ssec_PropRes5} (applied in the ring $R[\alpha_{1},\dots,\alpha_{d}]\subset R'$).

Note that if we assume for simplicity that $R=R'$ is a domain, then the $\alpha_{i}$'s are the zeros of $f$ in $\sqrt{I}$.
\item \emph{Power series:} Assume   $R$ is $I$-adically complete and separated. Then  $\Resh(f,g)=\Res(f_{\mathrm{for}},g_{\mathrm{for}})$ where $\Res$ denotes the resultant defined in \cite{Berger20}.
\end{subseclist}

 \end{document}